\newtheorem{theorem}{Theorem}
\newtheorem{corollary}[theorem]{Corollary}
\newtheorem{lemma}[theorem]{Lemma}
\newtheorem{proposition}[theorem]{Proposition}
\theoremstyle{definition}
\newtheorem{example}[theorem]{Example}
\newtheorem{remark}[theorem]{Remark}
\title[Completely Degenerate Equilibria of the Kuramoto Model on Networks]
	{\large Completely Degenerate Equilibria \\ of the Kuramoto Model on Networks}
\author{Davide Sclosa}
\address{Vrije Universiteit Amsterdam}
\email{\ttfamily d.sclosa@vu.nl}
\newcommand {\T} {\mathbb T}
\newcommand {\R} {\mathbb R}
\newcommand {\Z} {\mathbb Z}
\begin{document}

\begin{abstract}
Kuramoto Networks contain non-hyperbolic equilibria whose stability is sometimes difficult
to determine. We consider the extreme case in which all Jacobian
eigenvalues are zero.
In this case linearizing the system
at the equilibrium leads to a Jacobian matrix which is zero in every entry.
We call these equilibria completely degenerate.
We prove that they exist for certain intrinsic frequencies if and only if
the underlying graph is bipartite, and that they do not exist for generic intrinsic frequencies.
In the case of zero intrinsic frequencies, we prove that they exist if and only if
the graph has an Euler circuit such that
the number of steps between any two visits at the same vertex is a multiple of~$4$.
The simplest example is the cycle graph with~$4$ vertices.
We prove that graphs with this property exist for every number of vertices~$N\geq 6$
and that they become asymptotically rare for~$N$ large.
Regarding stability, we prove that for any choice of intrinsic frequencies, any coupling strength
and any graph with at least one edge,
completely degenerate equilibria are not Lyapunov stable.
As a corollary, we obtain that
stable equilibria in Kuramoto Networks must have at least one strictly negative eigenvalue.
\end{abstract}

\maketitle

\section{Motivation}
Kuramoto Networks are widespread in neuroscience, biology, chemistry and
engineering,
as a natural framework for modeling
synchronization and pattern formation~\cite{acebron2005kuramoto, arenas2008synchronization, dorfler2014synchronization, dorfler2013synchronization}.
Changing the underlying graph, even just removing one edge,
can lead to a dramatically different dynamical behavior.
Therefore, great effort has been put into understanding dynamics on different
classes of graphs:
complete graphs~\cite{jadbabaie2004stability, Taylor2012},
cycles~\cite{canale2009, Wiley2006, roy2012synchronized},
bipartite graphs~\cite{verwoerd2009computing},
planar graphs~\cite{delabays2017multistability},
dense graphs~\cite{Kassabov2021, Ling2019, lu2020, Taylor2012, Yoneda2021},
sparse graphs~\cite{sokolov2019sync}, $3$-regular graphs~\cite{DeVille2016},
trees~\cite{Dekker2013, Jafarian2018}, and stars~\cite{Chen2019}.
Here, rather than focusing on a particular type of graph,
we focus on a particular type of equilibrium.
We believe that degenerate equilibria in Kuramoto Networks
are always caused by some special combinatorial properties of the underlying graph,
which can be exploited to determine stability when linearization cannot.
The results in this paper support this belief.

Let~$G$ be a graph
with vertices~$1,\ldots,N$ and adjacency matrix~$(a_{jk})_{j,k}$.
To every vertex~$k$ we associated a~\emph{phase}~$\theta_k$
in the $1$-dimensional~$\T = \R/2\pi\Z$,
and a constant \emph{intrinsic frequency}~$\omega_k\in \R$.
The connection between vertices is modulated by a \emph{coupling strength}~$K>0$.
We call~\emph{Kuramoto Network on~$G$} the coupled dynamical system
\begin{equation} \label{eq:main_intro}
	\dot \theta_k = \omega_k + K\sum_{j=1}^N a_{jk} \sin(\theta_j-\theta_k),
	\qquad k=1,\ldots,N.
\end{equation}
The equilibria of~\eqref{eq:main_intro} are never isolated, due to the presence
of phase-shift symmetry, and in particular they are never hyperbolic.
Some equilibria are hyperbolic up to symmetry:
a typical example is the fully synchronized state, in which all the vertices
share the same phase.
In this paper we are interested in equilibria that are non-hyperbolic up to symmetry.

We say that an equilibrium is~\emph{completely degenerate} if the Jacobian matrix,
obtained by linearizing the vector field at the equilibrium, is zero in every entry.
As we will see, the Jacobian matrices obtained by linearizing~\eqref{eq:main_intro}
are symmetric. Therefore, they are zero in every entry if and only if all the
eigenvalues are zero.

Completely degenerate equilibria are interesting.
First, they are a source of counterexamples.
For instance, they show that
Taylor's inequalities~\cite[Lemma 2.1]{Taylor2012}
are not sufficient to guarantee stability.
Second, they are equilibria with critical edges
(see~\cite{DeVille2016} for the definition), for which little is known.
Third, they are closely related
to Eulerian graphs, a class of graphs not yet analyzed
in Kuramoto Networks literature.
Fourth, as their stability cannot be determined by linear stability analysis,
they require alternative techniques: in this paper
we will combine results on real-analytic gradient systems with graph-theoretical arguments.

\begin{figure}
\begin{tikzcd}
\begin{tikzpicture} [scale=0.7]
\draw (2,2) -- (-2,2);
\draw (-2,2) -- (-2,-2);
\draw (-2,-2) -- (2,-2);
\draw (2,-2) -- (2,2);
\fill [green] (2, 2) circle (0.15);
\fill [blue] (2, -2) circle (0.15);
\fill [red] (-2, 2) circle (0.15);
\fill [yellow] (-2, -2) circle (0.15);
\end{tikzpicture}
&
\begin{tikzpicture} [scale=0.7]
\draw (1,1) -- (-1,1);
\draw (-1,1) -- (-1,-1);
\draw (-1,-1) -- (1,-1);
\draw (1,-1) -- (1,1);
\draw (1+0.5,1+0.25) -- (-1+0.5,1+0.25);
\draw (-1+0.5,1+0.25) -- (-1+0.5,-1+0.25);
\draw (-1+0.5,-1+0.25) -- (1+0.5,-1+0.25);
\draw (1+0.5,-1+0.25) -- (1+0.5,1+0.25);
\draw (1,1) -- (1+0.5, 1+0.25);
\draw (-1,1) -- (-1+0.5, 1+0.25);
\draw (1,-1) -- (1+0.5, -1+0.25);
\draw (-1,-1) -- (-1+0.5, -1+0.25);
\draw (2.5, 2.5) -- (-2.5, 2.5);
\draw (-2.5, 2.5) -- (-2.5, -2.5);
\draw (-2.5, -2.5) -- (2.5, -2.5);
\draw (2.5, -2.5) -- (2.5, 2.5);
\draw (2.5+1, 2.5+ 0.5) -- (-2.5+1, 2.5+ 0.5);
\draw (-2.5+1, 2.5+ 0.5) -- (-2.5+1, -2.5+ 0.5);
\draw (-2.5+1, -2.5+ 0.5) -- (2.5+1, -2.5+ 0.5);
\draw (2.5+1, -2.5+ 0.5) -- (2.5+1, 2.5+ 0.5);
\draw (2.5, 2.5) -- (2.5+1, 2.5+0.5);
\draw (-2.5, 2.5) -- (-2.5+1, 2.5+0.5);
\draw (2.5, -2.5) -- (2.5+1, -2.5+0.5);
\draw (-2.5, -2.5) -- (-2.5+1, -2.5+0.5);
\draw (1,1) -- (2.5, 2.5);
\draw (-1,1) -- (-2.5, 2.5);
\draw (1,-1) -- (2.5, -2.5);
\draw (-1,-1) -- (-2.5, -2.5);
\draw (1+0.5,1+0.25) -- (2.5+1, 2.5+0.5);
\draw (-1+0.5,1+0.25) -- (-2.5+1, 2.5+0.5);
\draw (1+0.5,-1+0.25) -- (2.5+1, -2.5+0.5);
\draw (-1+0.5,-1+0.25) -- (-2.5+1, -2.5+0.5);
\fill [green] (1, 1) circle (0.15);
\fill [blue] (1, -1) circle (0.15);
\fill [red] (-1, 1) circle (0.15);
\fill [yellow] (-1, -1) circle (0.15);
\fill [red] (1 + 0.5, 1 + 0.25) circle (0.15);
\fill [yellow] (1 + 0.5, -1+ 0.25) circle (0.15);
\fill [green] (-1+ 0.5, 1+ 0.25) circle (0.15);
\fill [blue] (-1+ 0.5, -1+ 0.25) circle (0.15);
\fill [blue] (2.5, 2.5) circle (0.15);
\fill [green] (2.5, -2.5) circle (0.15);
\fill [yellow] (-2.5, 2.5) circle (0.15);
\fill [red] (-2.5, -2.5) circle (0.15);
\fill [yellow] (2.5 + 1, 2.5 + 0.5) circle (0.15);
\fill [red] (2.5 + 1, -2.5+ 0.5) circle (0.15);
\fill [blue] (-2.5+ 1, 2.5+ 0.5) circle (0.15);
\fill [green] (-2.5+ 1, -2.5+ 0.5) circle (0.15);
\end{tikzpicture}
\end{tikzcd}
\caption{Completely degenerate equilibria on the square and on the hypercube.
Blue, green, red and yellow
denote $0$, $\pi/2$, $\pi$ and~$3\pi/2$ respectively.}
\label{fig:hypercube}
\end{figure}
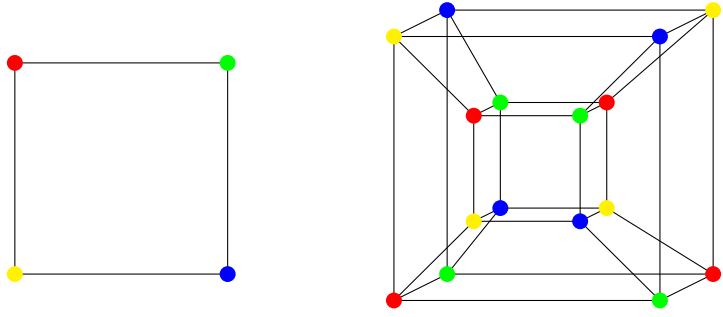

\section{Zero Intrinsic Frequencies}
\subsection{Classification}
Let us begin by analyzing completely degenerate equilibria in the case of zero intrinsic
frequencies, that is
$
	\omega_1=\cdots=\omega_N=0.
$
Up to rescaling time we can suppose~$K=1$, obtaining
\begin{equation} \label{eq:main}
	\dot \theta_k = \sum_{j=1}^N a_{jk} \sin(\theta_j-\theta_k),\qquad k=1,\ldots,N.
\end{equation}
Let~$F$ denote the vector field induced by~\eqref{eq:main}.
Let~$DF(\theta)$ denote the differential of the vector field at~$\theta$.
For every~$j,k\in \{1,\ldots,N\}$ we have
\begin{equation} \label{eq:DF}
	DF(\theta)_{jk} =
    	\begin{cases*}
    		a_{jk} \cos(\theta_j - \theta_k) & if $j\neq k,$ \\
    		- \sum_j a_{jk} \cos(\theta_j - \theta_k) & if $j=k$.
    	\end{cases*}
\end{equation}
If~\eqref{eq:main} contains a completely degenerate equilibrium,
we say that the graph~$G$~\emph{admits} completely degenerate equilibria.

\begin{lemma} \label{lem:criterion}
A point~$\theta\in \T^N$ is a completely degenerate equilibrium
if and only if, for every vertex~$k$, half of its neighbors have phase~$\theta_k + \pi/2$ and
the other half~$\theta_k - \pi/2$.
\end{lemma}
\begin{proof}
From~\eqref{eq:main} and~\eqref{eq:DF} it follows that a point~$\theta$
is a completely degenerate equilibrium if and only if
\begin{align}
&\sum_{j} a_{jk} \sin(\theta_j - \theta_k) = 0 \label{eq:sine_condition} \\
\intertext{for every vertex~$k$ and}
&\cos(\theta_j - \theta_k) = 0 \label{eq:cosine_condition}
\end{align}
for every edge~$jk$.
These equations are satisfied if and only if for every edge~$jk$
we have~$\sin(\theta_j - \theta_k) = \pm 1$ and for every vertex half
the sines are equal to~$+1$, the other half to~$-1$.
\end{proof}

Let us recall some definitions from graph theory.
A \emph{cycle} is a closed graph walk without repeated vertices.
A \emph{circuit} is a closed graph walk without repeated edges.
An \emph{Euler circuit} is a circuit that uses all the graph edges. If an Euler circuit exists,
the graph is called~\emph{Eulerian}. Notice that every cycle is a circuit.

The following theorem characterizes graphs
admitting completely degenerate equilibria
as well as the set of completely degenerate equilibria given a graph.
Notice that, since distinct connected components have independent dynamics,
it is enough to consider connected graphs.

\begin{theorem} \label{thm:1}
A connected graph~$G$ admits a completely degenerate equilibrium if and only if
there is an Euler circuit such that
the number of steps between any two visits at the same vertex is a multiple of~$4$.
Every completely degenerate equilibrium is obtained by fixing such an Eulerian circuit,
choosing the phase of one vertex arbitrarily and increasing the phase of the next vertex
by~$\pi/2$ at each step of the circuit.
\end{theorem}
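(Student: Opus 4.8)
The plan is to exploit Proposition~\ref{prop:criterion} and reduce the theorem to the existence of a \emph{directed} Eulerian circuit. The crucial observation is that, in a completely degenerate equilibrium, the phase changes by exactly $\pm\pi/2$ across every edge, so each edge acquires a natural orientation: from the endpoint of smaller phase to the endpoint of larger phase, the increment being $+\pi/2$. The multiple-of-four condition will turn out to encode nothing more than the fact that $4\cdot\pi/2 = 2\pi$, i.e.\ that the phases are consistently defined modulo $2\pi$.

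First, for the direction assuming such a circuit exists, I would fix an Euler circuit $v_0, v_1, \dots, v_m = v_0$ with the stated property and set $\theta_{v_i} = i\pi/2 \pmod{2\pi}$. The multiple-of-four condition guarantees this is well defined even though vertices are visited repeatedly: if $v$ is visited at steps $i$ and $j$, then $i - j \equiv 0 \pmod 4$ gives $(i-j)\pi/2 \equiv 0 \pmod{2\pi}$. I would then verify the hypothesis of Proposition~\ref{prop:criterion}. At each pass through a vertex $v$, the circuit arrives along one edge, from a neighbor with phase $\theta_v - \pi/2$, and leaves along another, to a neighbor with phase $\theta_v + \pi/2$. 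Since the circuit is Eulerian, the passes exhaust all edges incident to $v$ and pair them into arriving and leaving edges in equal number; hence exactly half of the neighbors of $v$ have phase $\theta_v + \pi/2$ and half have phase $\theta_v - \pi/2$, so $\theta$ is completely degenerate.

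For the converse, and simultaneously the final sentence of the theorem, I would start from an arbitrary completely degenerate equilibrium $\theta$. By Proposition~\ref{prop:criterion} every vertex has even degree, so the connected graph is Eulerian, and every edge $jk$ satisfies $\theta_j - \theta_k \equiv \pm\pi/2$. Orient each edge in the direction of the $+\pi/2$ increment. At a vertex $v$ the out-edges point to the neighbors with phase $\theta_v + \pi/2$ and the in-edges come from the neighbors with phase $\theta_v - \pi/2$; Proposition~\ref{prop:criterion} says these two sets have equal size, so the in-degree equals the out-degree at every vertex. Together with connectivity this is exactly the classical criterion for the existence of a directed Eulerian circuit $v_0, \dots, v_m = v_0$. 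By construction each step increases the phase by $+\pi/2$, so $\theta_{v_i} = \theta_{v_0} + i\pi/2$; this is precisely the phase assignment claimed by the theorem, and equating the phases at two visits of the same vertex forces the corresponding step counts to agree modulo four.

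The main obstacle is identifying this reformulation: once the $\pm\pi/2$ phase differences are read as an orientation, the balance condition of Proposition~\ref{prop:criterion} becomes the in-degree/out-degree equality, and everything collapses onto the directed Euler circuit theorem. The points requiring care are that weak connectivity together with the balance condition does yield a genuine (strongly connected) Eulerian circuit, and that, on a simple graph, edges at a vertex correspond bijectively to its neighbors, so that the counting underlying Proposition~\ref{prop:criterion} is legitimate.
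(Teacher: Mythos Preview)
Your argument is correct, and it takes a genuinely different route from the paper in the converse direction. The paper builds the desired circuit by hand, via a Hierholzer-type maximal-walk argument: it takes a maximal walk in which the phase increases by $\pi/2$ at each step, shows by a parity count that such a walk must be closed, and then shows it must be Eulerian by observing that the equilibrium remains completely degenerate on the complement and splicing in an additional circuit if any edge is missed. You instead extract the key structural content up front---the $\pm\pi/2$ phase differences orient the edges, and Proposition~\ref{prop:criterion} becomes exactly the in-degree/out-degree balance condition---and then invoke the classical directed Eulerian circuit theorem as a black box. Your version is cleaner and more conceptual, making transparent that the theorem is really the directed Euler criterion in disguise; the paper's version is more self-contained, since it does not need to appeal to an external result (and in effect reproves the directed Euler theorem tailored to this setting). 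Your caveat about weak connectivity plus balance yielding a directed Eulerian circuit is well placed; that is indeed the form of the theorem you need, and it holds.
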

\begin{proof}
Suppose that~$G$ admits completely degenerate equilibria.
Fix a completely degenerate equilibrium~$\theta$.
We say that a circuit has the property~$P$ if at each step the phase increases by~$\pi/2$.
By Lemma~\ref{lem:criterion} it follows that every edge of~$G$
is contained in a circuit satisfying~$P$.
Moreover, it follows that
if we remove from~$G$ the edges of a circuit satisfying~$P$, then~$\theta$
is still a completely degenerate equilibrium.
Therefore, the set of edges of~$G$ is the union of edge-disjoint circuits satisfying~$P$.
If two such circuits intersect in a vertex, their union can be walked in a way
that makes it a circuit satisfying~$P$.
Since~$G$ is connected, we conclude that there is a circuit satisfying~$P$ containing every edge,
that is, an Euler circuit.

Conversely, suppose that~$G$ contains an Euler circuit satisfying~$P$.
Fix such a circuit.
Choose the phase of one vertex arbitrarily and increase the phase of the next
vertex by~$\pi/2$ at each step of the Euler circuit.
Since the circuit visits every vertex and satisfies~$P$,
this process defines a phase~$\theta_k$ at each vertex~$k$ in a consistent way.
By Lemma~\ref{lem:criterion} it follows that~$\theta$ is a completely degenerate equilibrium.
In particular, this proves that~$G$ admits completely degenerate equilibria.
\end{proof}

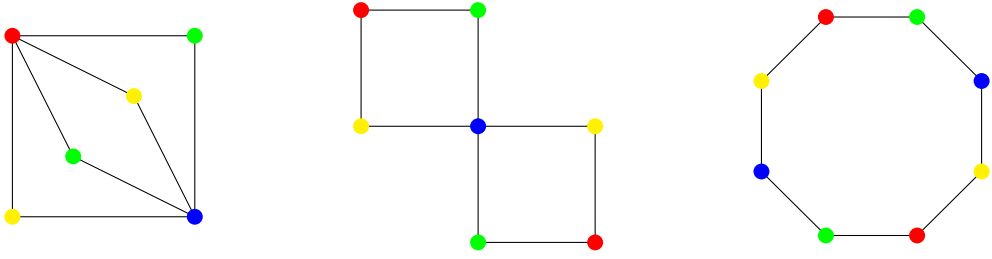
\begin{figure}
\begin{tikzcd}
\begin{tikzpicture} [scale=0.8]
\draw (0,-1.5) -- (3,-1.5);
\draw (0,1.5) -- (3,1.5);
\draw (0, -1.5) -- (0, +1.5);
\draw (3, -1.5) -- (3, +1.5);
\draw (3,-1.5) -- (1, -0.5);
\draw (3,-1.5) -- (2, 0.5);
\draw (0,1.5) -- (1, -0.5);
\draw (0,1.5) -- (2, 0.5);
\fill [blue] (3, -1.5) circle (0.1312);
\fill [green] (3, 1.5) circle (0.1312);
\fill [yellow] (0, -1.5) circle (0.1312);
\fill [red] (0, 1.5) circle (0.1312);
\fill [yellow] (2, 0.5) circle (0.1312);
\fill [green] (1, -0.5) circle (0.1312);
\end{tikzpicture}
&
\begin{tikzpicture} [scale=0.7]
\draw (0,0) -- (2.2,0);
\draw (0,0) -- (-2.2,0);
\draw (0,0) -- (0,2.2);
\draw (0,0) -- (0,-2.2);
\draw (-2.2,0) -- (-2.2,2.2);
\draw (2.2,0) -- (2.2,-2.2);
\draw (0,2.2) -- (-2.2,2.2);
\draw (0,-2.2) -- (2.2,-2.2);
\fill [blue] (0,0) circle (0.15);
\fill [green] (0,2.2) circle (0.15);
\fill [green] (0,-2.2) circle (0.15);
\fill [yellow] (2.2, 0) circle (0.15);
\fill [red] (2.2, -2.2) circle (0.15);
\fill [yellow] (-2.2, 0) circle (0.15);
\fill [red] (-2.2, 2.2) circle (0.15);
\end{tikzpicture}
&
\begin{tikzpicture} [scale=0.6]
\draw (2.414, 1) -- (1, 2.414);
\draw (1, 2.414) -- (-1, 2.414);
\draw (-2.414, 1) -- (-1, 2.414);
\draw (-2.414, 1) -- (-2.414, -1);
\draw (-2.414, -1) -- (-1, -2.414);
\draw (1, -2.414) -- (-1, -2.414);
\draw (2.414, -1) -- (1, -2.414);
\draw (2.414, 1) -- (2.414, -1);
\fill [green] (1, 2.414) circle (0.175);
\fill [red] (1, -2.414) circle (0.175);
\fill [red] (-1, 2.414) circle (0.175);
\fill [green] (-1, -2.414) circle (0.175);
\fill [blue] (2.414, 1) circle (0.175);
\fill [yellow] (-2.414, 1) circle (0.175);
\fill [yellow] (2.414, -1) circle (0.175);
\fill [blue] (-2.414, -1) circle (0.175);
\end{tikzpicture}
\end{tikzcd}
\caption{
Blue, green, red and yellow denote~$0$, $\pi/2$, $\pi$ and~$3\pi/2$ respectively.}
\label{fig:quotient}
\end{figure}

\begin{example}
The cycle graph with $4$~vertices admits completely degenerate equilibria.
An explicit example is given in Figure~\ref{fig:hypercube}.
This graph admits two completely degenerate equilibria
up to phase-shift symmetry, one for every orientation of the cycle.
\end{example}

\begin{example}
The graph formed by the vertices and edges of the $4$-dimensional hypercube
admits completely degenerate equilibria. An explicit
example is given in Figure~\ref{fig:hypercube}. More generally,
any even-dimensional hypercube admits completely degenerate equilibria.
\end{example}

There are infinitely many graphs admitting completely degenerate equilibria:
\begin{proposition} \label{prop:infinitely_many}
For every~$N\geq 6$ there is a connected graph on~$N$ vertices admitting completely degenerate
equilibria.
\end{proposition}
\begin{proof}
Explicit examples with~$N=6,7,8$ vertices are given in Figure~\ref{fig:quotient}.
Given a completely degenerate equilibrium on a connected graph with~$N$ vertices,
we can obtain a completely degenerate equilibrium on a connected graph with~$N+3$
vertices by glueing a $4$-cycle to the original graph. For example, the graph with
$7$~vertices of Figure~\ref{fig:quotient} is obtained by glueing two $4$-cycles together.
\end{proof}

Although infinite in number, graphs admitting completely degenerate equilibria are
asymptotically rare:

\begin{proposition}
The probability that a graph chosen uniformly at random
among the graphs with~$N$ vertices admits completely degenerate equilibria
goes to~$0$ as~$N$ goes to infinity.
\end{proposition}
\begin{proof}
By Lemma~\ref{lem:criterion} a graph admitting completely degenerate equilibria
is triangle-free. Let us prove that triangle-free graphs
have asymptotic probability~$0$.
Let~$G$ be a graph on~$N$ vertices.
Partition the vertices into~$\lfloor N/3 \rfloor$ subsets of size~$3$, and possibly
a subset of smaller size.
There are~$8$ distinct graphs on~$3$ vertices, and~$7$ of these are not triangles.
Therefore, the probability that~$G$ is triangle-free is at most
$
	(7/8)^{\lfloor N/3 \rfloor}.
$
In particular, the probability goes to~$0$ as~$N$ goes to infinity.
\end{proof}

\begin{remark}
From Lemma~\ref{lem:criterion} it follows that if~$G$ admits completely degenerate equilibria
then~$G$ contains no cycles of odd length. In particular~$G$ is bipartite.
That is, bipartiteness is necessary for the existence
completely degenerate equilibria in the case of zero intrinsic frequencies.
Theorem~\ref{thm:1} implies that it is not sufficient.
In a later section (Theorem~\ref{thm:bipartite}) we will show that bipartiteness is
necessary and sufficient for the existence of completely degenerate equilibria for \emph{some}
(not necessarily all zero) intrinsic frequencies.
\end{remark}

\subsection{Stability}
Stability of completely degenerate equilibria cannot be determined by linearization.
The goal of this section is proving that completely degenerate equilibria are never Lyapunov stable.
In particular, they are never asymptotically stable.

It is well known~\cite{Ling2019, Wiley2006} that~\eqref{eq:main} is a gradient system with respect
to the~\emph{energy function}
\begin{equation} \label{eq:energy}
	E(\theta) = \sum_{jk \in e(G)} a_{jk} (1 - \cos(\theta_j - \theta_k)).
\end{equation}
Here~$e(G)$ denotes the set of edges of~$G$ and each edge~$jk$ is counted exactly once in the sum.
For every point~$\theta\in \T^N$ the identity~$F(\theta) = -DE(\theta)$ holds.
Intuitively, this means that trajectories always evolve in the direction
in which~$E$ decreases maximally.

Intuitively, the energy decreases over time until a stationary point (or equilibrium) is reached.
The connection between stability of and minimality is, however, subtle.
There are smooth energy functions with
Lyapunov stable equilibria that
are not local minimizers
and local minimizers that are not Lyapunov stable~\cite{Absil2006}.
However, if the energy function is real-analytic, then the Lyapunov stable
equilibria are exactly the local minimizers of the energy~\cite{Absil2006}.
Since~\eqref{eq:energy} is real-analytic, this result applies to our case.

Since distinct connected components have independent dynamics,
it is enough to consider connected graphs.
Moreover, we assume that the graph has at least one edge.
Notice that if~$G$ has no edges then there is no dynamics and every point is a
(completely degenerate) Lyapunov stable equilibrium.

\begin{theorem} \label{thm:2}
For every connected graph with at least one edge,
completely degenerate equilibria are not Lyapunov stable.
\end{theorem}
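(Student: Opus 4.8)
The plan is to use that \eqref{eq:main} is a real-analytic gradient system: by the result recalled above \cite{Absil2006}, an equilibrium is Lyapunov stable if and only if it is a local minimizer of the energy $E$. Hence it suffices to exhibit, arbitrarily close to a completely degenerate equilibrium $\theta$, a point with strictly smaller energy, i.e.\ to show that $\theta$ is not a local minimizer of $E$. Since $\theta$ is completely degenerate, the Hessian of $E$ at $\theta$ equals $-DF(\theta)=0$, so the second-order test is inconclusive and I would expand $E$ to third order along a well-chosen direction.

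First I would normalize the configuration. A global phase shift preserves both $E$ and the set of equilibria, and by Proposition~\ref{prop:criterion} the set of phases occurring in $\theta$ is invariant under $\pm\pi/2$; hence, after shifting, I may assume every phase lies in $\{0,\pi/2,\pi,3\pi/2\}$ and that all four values occur. Writing $s_{jk}:=\sin(\theta_j-\theta_k)\in\{\pm1\}$ for each edge (Proposition~\ref{prop:criterion} gives $\theta_j-\theta_k=\pm\pi/2$), the identity $\cos(\theta_j-\theta_k+x)=-s_{jk}\sin x$ yields, for a perturbation direction $\xi\in\R^N$,
\[
 E(\theta+\epsilon\xi)-E(\theta)=\sum_{jk\in e(G)} a_{jk}\,s_{jk}\,\sin\!\big(\epsilon(\xi_j-\xi_k)\big).
\]
Because $\theta$ is an equilibrium the linear term in $\epsilon$ vanishes for every $\xi$, and because $\sin$ is odd there is no $\epsilon^2$ term; thus the leading contribution is the cubic form
\[
 C_3(\xi):=\sum_{jk\in e(G)} a_{jk}\,s_{jk}\,(\xi_j-\xi_k)^3 ,
\]
and it is enough to find one $\xi$ with $C_3(\xi)\neq0$.

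The crux---and the step I expect to be the main obstacle---is that complete degeneracy makes $C_3$ vanish on all the obvious directions: a single-vertex perturbation $\xi=e_v$ gives $C_3(e_v)=\tfrac12\big(\sum_k a_{vk}s_{vk}-\sum_j a_{jv}s_{jv}\big)=0$ by the equilibrium condition, and any symmetric choice cancels as well. The fix is to perturb along the phases themselves: take $\xi_k$ to be the representative of $\theta_k$ in $[0,2\pi)$. Orienting each edge by the Euler circuit of Theorem~\ref{thm:1} (head $=$ tail $+\pi/2$), one has $\xi_{\mathrm{head}}-\xi_{\mathrm{tail}}=\pi/2$ on every edge except the wrap-around edges---those oriented from a phase-$3\pi/2$ vertex to a phase-$0$ vertex---where it equals $-3\pi/2$. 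Since the circuit has length $m:=|e(G)|$, a multiple of $4$, each of the four phases is the tail of exactly $m/4$ edges, so
\[
 C_3(\xi)=\frac{m}{4}\Big(3\,(\pi/2)^3+(-3\pi/2)^3\Big)=-6\,m\,(\pi/2)^3\neq0 .
\]
The asymmetry that survives the cancellation is precisely the single wrap-around of the phases around the circle, i.e.\ it is forced by the Eulerian structure.

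Finally, substituting back gives $E(\theta+\epsilon\xi)-E(\theta)=m\,(\pi/2)^3\,\epsilon^3+O(\epsilon^5)$, which is strictly negative for small $\epsilon<0$ (here $m\geq1$ since the graph is connected with at least two vertices). Hence $\theta$ is not a local minimizer of $E$, and by the analytic gradient criterion it is not Lyapunov stable. Since distinct connected components evolve independently in~\eqref{eq:main}, the reduction to connected graphs loses no generality, completing the argument.
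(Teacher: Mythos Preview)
Your proof is correct, and the overall logic (show $\theta$ is not a local minimizer of the real-analytic energy, then invoke the analytic gradient criterion) matches the paper's. The difference lies in the perturbation chosen. The paper perturbs only two consecutive vertices $j,k$ along the Euler circuit of Theorem~\ref{thm:1}, setting $\theta^x_j=\theta_j+x$, $\theta^x_k=\theta_k-x$, and computes $E(\theta)-E(\theta^x)$ \emph{exactly}: visits to $j$ (resp.\ $k$) not passing through $k$ (resp.\ $j$) cancel pairwise, and the three edges around the special visit yield $\sin(2x)-2\sin x\sim -x^3$, which changes sign at $x=0$. You instead perturb every vertex by its phase representative $\xi_k\in[0,2\pi)$, expand to third order, and evaluate the cubic form $C_3$ by counting along the circuit; the only asymmetry surviving the equilibrium cancellations is the single wrap-around $3\pi/2\to 0$, giving $C_3(\xi)=-6m(\pi/2)^3\neq0$.

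Both routes use the Euler circuit from Theorem~\ref{thm:1}; the paper's is more local and elementary (a closed-form one-variable expression, no Taylor remainder), while yours is more structural, making transparent that it is the nontrivial winding of the phases around $\T$ that forces the third-order term to be nonzero. Incidentally, applying your cubic-form computation to the paper's two-vertex direction $\xi=e_j-e_k$ gives $C_3(\xi)=8s_{jk}+\!\!\sum_{w\sim j,\,w\neq k}\!\!s_{jw}-\!\!\sum_{w\sim k,\,w\neq j}\!\!s_{kw}=8s_{jk}-s_{jk}-s_{jk}=6s_{jk}=-6$ (using the equilibrium identities $\sum_{w\sim j}s_{jw}=0$), which recovers the leading coefficient of $\sin(2x)-2\sin x$ and shows the two arguments are consistent.
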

\begin{proof}
Let~$\theta\in \T^N$ be a completely degenerate equilibrium.
We will show that~$\theta$ is a saddle point of the energy function~\eqref{eq:energy}, that is,
a point that is neither a local minimizer nor a local maximizer.
It follows that~$\theta$ is not Lyapunov-stable.

Fix an Euler circuit as in Theorem~\ref{thm:1}
and let~$j,k$ be two consecutive vertices in the circuit.
We have~$\theta_k = \theta_j + \pi/2$. For every~$x\in \R$
let~$\theta^x \in \T^N$ be defined as
\begin{equation*}
\begin{dcases*}
\theta^x_j = \theta_j + x, \\
\theta^x_k = \theta_k -x, \\
\theta^x_h = \theta_h, & $h\notin \{j,k\}$.
\end{dcases*}
\end{equation*}
Our goal is proving that~$E(\theta) - E(\theta^x)$ changes sign in every neighborhood of~$x=0$.
By~\eqref{eq:cosine_condition} and~\eqref{eq:energy} it follows that
\[
	E(\theta) - E(\theta^x) = \sum_{pq \in e(G)} \cos(\theta^x_p - \theta^x_q).
\]
Let us compute this sum by following the circuit.
The edge entering~$j$ before~$jk$, the edge~$jk$,
and the edge leaving~$k$ after~$jk$ amount to
\begin{align*}
	\cos(\pi/2 + x) + \cos(\pi/2 -2x) + \cos(\pi/2 - x) = \sin(2x) - 2\sin(x).
\end{align*}
We claim the other terms in the sum are either zero or they cancel out.
If an edge is neither adjacent to~$j$ nor~$k$,
then~$\theta^x_p = \theta_p$ and~$\theta^x_q = \theta_q$ and
therefore~$\cos(\theta^x_p - \theta^x_q) = 0$.
Every visit to~$j$ that is not the one preceding~$jk$ amounts to
\[
	\cos(\pi/2 + x) + \cos(\pi/2 - x) = - \sin(x) + \sin(x) = 0.
\]
Similarly any other visits to~$k$ that is not the one involving~$jk$ amounts to~$0$.
Therefore
\[
	E(\theta) - E(\theta^x) = \sin(2x) - 2\sin(x).
\]
Since this function changes sign in every neighborhood of~$x=0$,
the point~$\theta$ is a saddle of the energy.
\end{proof}

\section{Non-Zero Intrinsic Frequencies} \label{sec:the_end}
\subsection{Classification}
In the previous section we assumed the intrinsic frequencies to be all equal to~$0$.
Let us return to the general case
\begin{equation} \label{eq:non-identical}
	\dot \theta_k = \omega_k + K \sum_{j=1}^N a_{jk} \sin(\theta_j-\theta_k).
\end{equation}

Notice that linearizing~\eqref{eq:non-identical} gives the same Jacobian matrix
as the zero-frequency case~\eqref{eq:DF}.
It follows that a point~$\theta\in \T^N$ is a completely degenerate equilibrium
of~\eqref{eq:non-identical}
if and only if
\begin{align}
& \sum_{j} a_{jk}
	\sin(\theta_j - \theta_k) = -\frac{\omega_k}{K}
	\label{eq:sine_condition_non-identical} \\
\intertext{for every vertex~$k$ and}
&\cos(\theta_j - \theta_k) = 0
	\label{eq:cosine_condition_non-identical}
\end{align}
for every edge~$jk$. In particular we obtain:

\begin{proposition} \label{prop:generic_frequencies}
For a generic choice of intrinsic frequencies~$\omega_1,\ldots,\omega_N$
there are no graphs admitting completely degenerate equilibria.
\end{proposition}
\begin{proof}
If
equation~\eqref{eq:sine_condition_non-identical} and
equation~\eqref{eq:cosine_condition_non-identical} hold
then~$\omega_k/K$ is an integer for every~$k$.
Therefore, if there is a graph admitting completely degenerate equilibria
then the intrinsic frequencies are all elements of the lattice~$K\Z$.
\end{proof}

We showed (Proposition~\ref{prop:generic_frequencies})
that for generic intrinsic frequencies there are no graphs admitting
completely degenerate equilibria.
On the other hand, we know that for zero intrinsic frequencies there are infinitely many graphs
admitting completely degenerate equilibria (Proposition~\ref{prop:infinitely_many}).
This suggests asking which graphs admit completely degenerate equilibria for
\emph{some} intrinsic frequencies:

\begin{theorem} \label{thm:bipartite}
For every graph~$G$ the following facts are equivalent:
\begin{enumerate} [(i)]
\item There are coupling strength~$K$ and intrinsic frequencies~$\omega_1,\ldots,\omega_N$
such that~$G$ admits completely degenerate equilibria;
\item $G$ is bipartite.
\end{enumerate}
\end{theorem}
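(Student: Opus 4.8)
The plan is to reduce the whole question to the single cosine condition~\eqref{eq:cosine_condition_non-identical}, because in the non-identical setting the sine condition~\eqref{eq:sine_condition_non-identical} imposes no restriction on~$\theta$. Indeed, given any phases~$\theta$ satisfying~\eqref{eq:cosine_condition_non-identical}, I would fix an arbitrary coupling strength~$K>0$ and simply \emph{define} the frequencies by~$\omega_k = -K\sum_j a_{jk}\sin(\theta_j-\theta_k)$, so that~\eqref{eq:sine_condition_non-identical} holds by construction. This is the crux: unlike the identical case, where the sine condition forced Eulerianity, here the frequencies absorb it, so a completely degenerate equilibrium exists for some choice of~$K,\omega$ exactly when~\eqref{eq:cosine_condition_non-identical} is solvable.

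Next I would prove that~\eqref{eq:cosine_condition_non-identical} is solvable if and only if~$G$ is bipartite. For the implication~(ii)~$\Rightarrow$~(i), assuming~$G$ bipartite with parts~$A,B$, I set~$\theta_v=0$ on~$A$ and~$\theta_v=\pi/2$ on~$B$; every edge crosses between the parts, so its phase difference is~$\pm\pi/2$ and the cosine condition holds, after which the frequency trick above completes the construction.

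For~(i)~$\Rightarrow$~(ii), I would start from a completely degenerate equilibrium~$\theta$. The cosine condition gives~$\theta_j-\theta_k\equiv\pm\pi/2\pmod{2\pi}$ on every edge, so passing to the doubled phases~$\psi_v=2\theta_v$ turns each edge into the relation~$\psi_j-\psi_k\equiv\pi\pmod{2\pi}$. Summing around a cycle of length~$\ell$ then yields~$\ell\pi\equiv0\pmod{2\pi}$, which forces~$\ell$ to be even; hence~$G$ contains no odd cycle and is bipartite.

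I do not anticipate a serious obstacle: the real content is recognizing that the sine condition is free, as in the first paragraph. The only place demanding care is the modular arithmetic after the substitution~$\psi=2\theta$---keeping every congruence modulo~$2\pi$ and correctly reading off the parity of the cycle length from~$\ell\pi\equiv0$.
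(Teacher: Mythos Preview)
Your proof is correct and follows the same strategy as the paper: in both directions the key is that the sine condition can be absorbed into the choice of~$\omega_k$, reducing everything to the cosine condition, and your construction for (ii)~$\Rightarrow$~(i) is identical to the paper's. The only cosmetic difference is in (i)~$\Rightarrow$~(ii): the paper reduces to a connected component and exhibits the bipartition explicitly as the two phase classes $\{\theta_1,\theta_1+\pi\}$ and $\{\theta_1+\pi/2,\theta_1+3\pi/2\}$, whereas you reach the same conclusion via the doubled-phase cycle-parity argument---both are standard routes to bipartiteness from the same $\pm\pi/2$ edge constraint.
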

\begin{proof}
Let~$\theta$ be a completely degenerate equilibrium.
By~\eqref{eq:cosine_condition_non-identical} the phase differences
of adjacent vertices are~$\pm \pi/2$. In particular~$G$ contains
no cycle of odd length. This is equivalent
to bipartiteness.

Conversely, suppose that~$G$ is bipartite.
Let the phases of one part be all equal to~$0$
and of the other part all equal to~$\pi/2$.
Then~\eqref{eq:cosine_condition_non-identical} is satisfied.
Now choose any~$K>0$ and for every vertex~$k$ define
\[
	\omega_k = -K \sum_{j} a_{jk} \sin(\theta_j - \theta_k).
\]
Then~\eqref{eq:sine_condition_non-identical} is also satisfied
and~$\theta$ is a completely degenerate equilibrium.
\end{proof}

\subsection{Stability}
As we will see, the argument used to determine instability in the case of zero
intrinsic frequencies can be adapted to the general case.
There is, however, a technical difference:
while~\eqref{eq:main} is a gradient system on~$\T^N$,
in general~\eqref{eq:non-identical} is only a gradient system locally, in a neighborhood
of the equilibrium.
This is explained in more details in the following proof.

\begin{theorem} \label{thm:2_non-identical}
For every connected graph with at least one edge, every coupling strength~$K>1$
and every intrinsic frequencies~$\omega_1,\ldots,\omega_N\in \R$,
the completely degenerate equilibria are not Lyapunov stable.
\end{theorem}
\begin{proof}
Up to rescaling time we can suppose~$K=1$.
Let~$\theta \in \T^N$ be a completely degenerate equilibrium.
By~\eqref{eq:sine_condition_non-identical} and~\eqref{eq:cosine_condition_non-identical}
the neighbors of a vertex~$k$ have phases $\theta_k + \pi/2$~or~$\theta_k - \pi/2$.
If, for every vertex~$k$, half of its neighbors have phase~$\theta_k + \pi/2$ and
the other half~$\theta_k - \pi/2$, then from equation~\eqref{eq:sine_condition_non-identical}
it follows that~$\omega_k=0$ for every~$k$ and Theorem~\ref{thm:2} applies.

Otherwise, there is a vertex~$k$ such that~$k$ has~$d^+$ neighbors with phase~$\theta_k + \pi/2$
and~$d^-$ neighbors with phase~$\theta_k - \pi/2$, where $d^+\neq d^-$.
For every~$x\in \R$ let~$\theta^x \in \T^N$ be defined as
\begin{equation*}
\begin{dcases*}
\theta^x_k = \theta_k +x, \\
\theta^x_j = \theta_j, & $j \neq k$.
\end{dcases*}
\end{equation*}
In a neighborhood of~$\theta$ the system~\eqref{eq:non-identical} is gradient with energy function
\[
	E(\theta) = -\sum_k \omega_x\theta_k
		+ \sum_{jk \in e(G)} (1 - \cos(\theta_j - \theta_k)).
\]
Notice that the terms~$\omega_x\theta_k$ are only well-defined locally since
there are no injective continuous maps from~$\T$ to~$\R$.
This is not an obstacle for the proof:
all we need is the function~$x\mapsto E(\theta^x)$ to be defined
in some neighborhood of~$x=0$. We have
\begin{align*}
	E(\theta) - E(\theta^x) &= \omega_k x
		+ \sum_j a_{jk} (\cos(\theta_j - \theta_k - x) - \cos(\theta_j - \theta_k)) \\
		&= \omega_k x + d^+ \cos(\pi/2-x) + d^- \cos(-\pi/2-x) \\
		&= \omega_k x + (d^+-d^-) \sin(x).
\end{align*}
From
equation~\eqref{eq:sine_condition_non-identical} and
equation~\eqref{eq:cosine_condition_non-identical} it follows that~$d^+-d^- = -\omega_k$.
Therefore
\[
	E(\theta) - E(\theta^x) = (d^+-d^-)(-x + \sin(x)).
\]
This function changes sign in every neighborhood of~$x=0$.
We conclude that~$\theta$ is not Lyapunov stable.
\end{proof}

As a corollary we obtain the following general result:

\begin{corollary} \label{cor:strictly_neg_eig}
For every graph with at least one edge,
every coupling strength and every intrinsic frequencies,
the Lyapunov stable equilibria of~\eqref{eq:non-identical}
have at least one strictly negative eigenvalue.
\end{corollary}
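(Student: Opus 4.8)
The plan is to prove the contrapositive: an equilibrium~$\theta$ whose Jacobian~$DF(\theta)$ has no strictly negative eigenvalue cannot be Lyapunov stable. First I would record that~$DF(\theta)$ is a real symmetric matrix, since by~\eqref{eq:DF_non-identical} its off-diagonal entries satisfy~$DF(\theta)_{jk}=a_{jk}\cos(\theta_j-\theta_k)=DF(\theta)_{kj}$. Hence its spectrum is real, and ``no strictly negative eigenvalue'' is equivalent to ``every eigenvalue is nonnegative''. The argument then splits into two cases according to whether~$DF(\theta)$ has a strictly positive eigenvalue.

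If~$DF(\theta)$ has a strictly positive eigenvalue, then the linearization at~$\theta$ has an eigenvalue with positive real part, so the standard instability criterion (Lyapunov's first method) shows that~$\theta$ is not Lyapunov stable. This case requires nothing beyond the classical instability theorem for an equilibrium whose Jacobian admits an expanding direction.

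The remaining case is that all eigenvalues of~$DF(\theta)$ vanish. Because~$DF(\theta)$ is symmetric, this forces~$DF(\theta)=0$, so~$\theta$ is a completely degenerate equilibrium, and I would invoke Theorem~\ref{thm:2_non-identical}, which says exactly that such equilibria are not Lyapunov stable. The only care needed is that Theorem~\ref{thm:2_non-identical} is stated for connected graphs with at least two vertices, whereas here~$G$ is merely non-edgeless. To bridge this I would note that, since there are no edges between distinct connected components, both the dynamics~\eqref{eq:non-identical} and the Jacobian decouple along components: $DF(\theta)$ is block diagonal, and the restriction of~$\theta$ to any component is a completely degenerate equilibrium of that component's restricted system. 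Choosing a component that contains an edge---one exists because~$G$ is non-edgeless---this component is connected with at least two vertices, so Theorem~\ref{thm:2_non-identical} makes the restricted equilibrium unstable; instability within one independently evolving component makes the full equilibrium~$\theta$ unstable.

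I expect the main obstacle to be the bookkeeping in this last case: checking that the restriction of~$\theta$ to an edge-containing component is again completely degenerate for the restricted dynamics, and that instability of a decoupled subsystem lifts to instability of the whole. Everything else---the symmetry of~$DF(\theta)$, the reality of its spectrum, and the positive-eigenvalue instability---is routine, so the proof reduces to a clean dichotomy feeding into Theorem~\ref{thm:2_non-identical}.
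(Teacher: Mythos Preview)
Your proposal is correct and follows essentially the same approach as the paper: reduce to connected components, pick one containing an edge, and invoke Theorem~\ref{thm:2_non-identical}. The only difference is organizational---you argue the contrapositive and make explicit the dichotomy between a positive eigenvalue (handled by linear instability) and all eigenvalues zero (handled by Theorem~\ref{thm:2_non-identical}), whereas the paper argues directly and leaves the positive-eigenvalue case implicit in the contrapositive of ``Lyapunov stable implies no positive eigenvalue.''
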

\begin{proof}
Without loss of generality, it is enough to consider connected graphs.
Let~$\theta$ be a Lyapunov stable equilibria.
By the Center Manifold Theorem the Jacobian eigenvalues at~$\theta$
are negative or zero.
By Theorem~\ref{thm:2_non-identical} they are not all zero.
\end{proof}

Corollary~\ref{cor:strictly_neg_eig} shows that the presence of strictly negative eigenvalues,
although not sufficient, is necessary for stability.
Therefore, albeit linear stability analysis is not enough to determine
the set of stable equilibria, it helps by restricting the search.

\begin{remark}
The proof of Theorem~\ref{thm:2_non-identical} actually shows something more:
completely degenerate equilibria are saddles of the energy.
In particular, it follows that they are unstable in both forward and backward
time, or if~$K$ is chosen to be negative.
\end{remark}

\bibliographystyle{siam}
\bibliography{refs}

\begin{thebibliography}{10}

\bibitem{Absil2006}
{\sc P.~A. Absil and K.~Kurdyka}, {\em {On the stable equilibrium points of
  gradient systems}}, Systems and Control Letters, 55 (2006), pp.~573--577.

\bibitem{acebron2005kuramoto}
{\sc J.~A. Acebr{\'o}n, L.~L. Bonilla, C.~J.~P. Vicente, F.~Ritort, and
  R.~Spigler}, {\em The kuramoto model: A simple paradigm for synchronization
  phenomena}, Reviews of modern physics, 77 (2005), p.~137.

\bibitem{arenas2008synchronization}
{\sc A.~Arenas, A.~D{\'\i}az-Guilera, J.~Kurths, Y.~Moreno, and C.~Zhou}, {\em
  Synchronization in complex networks}, Physics reports, 469 (2008),
  pp.~93--153.

\bibitem{canale2009}
{\sc E.~Canale and P.~Monzon}, {\em Global properties of kuramoto
  bidirectionally coupled oscillators in a ring structure}, in 2009 IEEE
  Control Applications,(CCA) \& Intelligent Control,(ISIC), IEEE, 2009,
  pp.~183--188.

\bibitem{Chen2019}
{\sc Z.~Chen, Y.~Zou, S.~Guan, Z.~Liu, and J.~Kurths}, {\em {Fully solvable
  lower dimensional dynamics of Cartesian product of Kuramoto models}}, New
  Journal of Physics, 21 (2019).

\bibitem{chung1997spectral}
{\sc F.~R. Chung}, {\em Spectral graph theory, cbms reg}, in Conf. Ser. Math,
  vol.~92, 1997, p.~11241131.

\bibitem{Dekker2013}
{\sc A.~H. Dekker and R.~Taylor}, {\em {Synchronization properties of trees in
  the kuramoto model}}, SIAM Journal on Applied Dynamical Systems, 12 (2013),
  pp.~596--617.

\bibitem{delabays2017multistability}
{\sc R.~Delabays, T.~Coletta, and P.~Jacquod}, {\em Multistability of
  phase-locking in equal-frequency kuramoto models on planar graphs}, Journal
  of Mathematical Physics, 58 (2017), p.~032703.

\bibitem{DeVille2016}
{\sc L.~DeVille and B.~Ermentrout}, {\em {Phase-locked patterns of the Kuramoto
  model on 3-regular graphs}}, Chaos, 26 (2016), pp.~1--11.

\bibitem{dorfler2014synchronization}
{\sc F.~D{\"o}rfler and F.~Bullo}, {\em Synchronization in complex networks of
  phase oscillators: A survey}, Automatica, 50 (2014), pp.~1539--1564.

\bibitem{dorfler2013synchronization}
{\sc F.~D{\"o}rfler, M.~Chertkov, and F.~Bullo}, {\em Synchronization in
  complex oscillator networks and smart grids}, Proceedings of the National
  Academy of Sciences, 110 (2013), pp.~2005--2010.

\bibitem{jadbabaie2004stability}
{\sc A.~Jadbabaie, N.~Motee, and M.~Barahona}, {\em On the stability of the
  kuramoto model of coupled nonlinear oscillators}, in Proceedings of the 2004
  American Control Conference, vol.~5, IEEE, 2004, pp.~4296--4301.

\bibitem{Jafarian2018}
{\sc M.~Jafarian, X.~Yi, M.~Pirani, H.~Sandberg, and K.~H. Johansson}, {\em
  Synchronization of kuramoto oscillators in a bidirectional
  frequency-dependent tree network}, in 2018 IEEE Conference on Decision and
  Control (CDC), IEEE, 2018, pp.~4505--4510.

\bibitem{Kassabov2021}
{\sc M.~Kassabov, S.~H. Strogatz, and A.~Townsend}, {\em {Sufficiently dense
  Kuramoto networks are globally synchronizing}}, Chaos: An Interdisciplinary
  Journal of Nonlinear Science, 31 (2021), p.~073135.

\bibitem{Ling2019}
{\sc S.~Ling, R.~Xu, and A.~S. Bandeira}, {\em {On the landscape of
  synchronization networks: A perspective from nonconvex optimization}}, SIAM
  Journal on Optimization, 29 (2019), pp.~1879--1907.

\bibitem{lu2020}
{\sc J.~Lu and S.~Steinerberger}, {\em Synchronization of kuramoto oscillators
  in dense networks}, Nonlinearity, 33 (2020), p.~5905.

\bibitem{roy2012synchronized}
{\sc T.~K. Roy and A.~Lahiri}, {\em Synchronized oscillations on a kuramoto
  ring and their entrainment under periodic driving}, Chaos, Solitons \&
  Fractals, 45 (2012), pp.~888--898.

\bibitem{sokolov2019sync}
{\sc Y.~Sokolov and G.~B. Ermentrout}, {\em When is sync globally stable in
  sparse networks of identical kuramoto oscillators?}, Physica A: Statistical
  Mechanics and its Applications, 533 (2019), p.~122070.

\bibitem{Taylor2012}
{\sc R.~Taylor}, {\em There is no non-zero stable fixed point for dense
  networks in the homogeneous kuramoto model}, Journal of Physics A:
  Mathematical and Theoretical, 45 (2012), p.~055102.

\bibitem{verwoerd2009computing}
{\sc M.~Verwoerd and O.~Mason}, {\em On computing the critical coupling
  coefficient for the kuramoto model on a complete bipartite graph}, SIAM
  Journal on Applied Dynamical Systems, 8 (2009), pp.~417--453.

\bibitem{Wiley2006}
{\sc D.~A. Wiley, S.~H. Strogatz, and M.~Girvan}, {\em The size of the sync
  basin}, Chaos: An Interdisciplinary Journal of Nonlinear Science, 16 (2006),
  p.~015103.

\bibitem{Yoneda2021}
{\sc R.~Yoneda, T.~Tatsukawa, and J.~N. Teramae}, {\em {The lower bound of the
  network connectivity guaranteeing in-phase synchronization}}, Chaos, 31
  (2021).

\end{thebibliography}

\end{document}